\newtheorem*{theorem*}{Theorem}
\newtheorem{lemma}{Lemma}
\newtheorem*{corollary*}{Corollary}
\newtheorem*{proposition*}{Proposition}
\newtheorem*{remark*}{Remark}
\newtheorem*{definition*}{Definition}
\title{On the definition of Suzuki\\ groups over rings}
\author{Andrei Smolensky\thanks{\,\ email: \texttt{andrei.smolensky@gmail.com}\newline Department of Mathematics and Mechanics, Saint Petersburg State University\newline The research was supported by RSF (project No. 17-11-01261)}}
\date{\today}
\begin{document}
\maketitle
\begin{abstract}
The definition of Suzuki groups over rings is given by means of an explicit description as a difference-algebraic group. For a (not necessarily perfect) field with more than two elements this construction produces a simple group.
\end{abstract}

Let $R$ be a commutative ring of characteristic $2$ posessing a Tits' endomorphism $\tau$, that is, and endomorphism that squares to the Frobenius endomorphism. An example of such a ring is the finite field with $2^{2n+1}$ elements, where $\tau(x)=x^{2^n}$.

Suzuki group over a field $F$ is defined as the subgroup of the symplectic group $Sp(4, F)$, pointwise invariant under the action of the exceptional automorphism, which is defined by its action on the elementary generators in terms of $\tau$ and the length-changing symmetry of the Dynkin diagram $\mathsf{C}_2$. This definition only works on the level of the elementary subgroup, and thus can not be directly generalized to the groups over rings.

Another definition treats Suzuki group as the subgroup of projective symplectic group, consisting of all projective mappings that commute with a certain polarity between the points in $\mathbb{P}^3$ and the lines of a line complex, defined by a linear equation on the Pl\"ucker coordinates. Over a field $F$ of characteristic $2$ there is no difference between the simply-connected and the adjoint groups of type $\mathsf{C}_\ell$, since the center of $Sp(4, F)$ is trivial. Over a ring $R$, however, this center might be larger, since it contains $\mu_2(R)$. This presents another obstacle to the definition of Suzuki group over rings.

The aim of the present note is to show that Tits' polarity map construction can be carried over to the vector space underlying $\mathbb{P}^3$, and that the resulting definition of Suzuki groups is easily extended to arbitrary commutative rings with Tits' endomorphism.

An explicit description was given by A.~Duncan in \cite{DuncanSp4}, but the construction there involves taking square roots, and most of the proofs assume the base ring to be a finite field. It is, moreover, not compliant with the usual matrix representation of $Sp(4, F)$. Our construction is consistent with the construction in Carter's book \cite{CarterLie} and the \texttt{GAP} realisation.

To give some motivation to the final definition, we give below a version of Tits' construction \cite{TitsSuzukiRee,TitsOvoides}, with some small details filled in.

Let $V$ be a vector space with basis $e^1, e^2, e^{-2}, e^{-1}$. Let $L$ be a line complex, that is, a set of lines in $\mathbb{P}V$, defined by the equation $p_{1,-1}=p_{2,-2}$ on their Pl\"ucker coordinates. This line complex is represented in $\mathbb{P}(\wedge^2V)$ by a subvariety $Q$ of the Klein quadric $K=\{ p_{1,2}p_{-2,-1} + p_{1,-2}p_{2,-1} + p_{1,-1}p_{2,-2} = 0 \}$.

Consider now the spaces tangent to $Q$. A hyperplane tangent to $V$ at $(q_{ij})_{i,j=1}^{-1}$ is defined by the equation (see \cite[Lecture~14]{HarrisAlgGeom})
\[ \sum q_{ij}p_{kl}=0,\ \text{the sum is over all}\ (\{i,j\},\{k,l\})\ \text{with all four indices distinct.} \]
The space tangent to $Q$ is the intersection of the hyperplane described above and the hyperplane tangent to $\{ p_{1,-1}=p_{2,-2} \}$, which is already a hyperplane. This intersection contains the point $O$ having $p_{1,-1}=p_{2,-2}=1$ and all other coordinates zero. On the other hand, the intersection of the hyperplanes tangent to $K$ at the four points with the sole non-zero coordinates respectively $q_{1,2}$, $q_{1,-2}$, $q_{2,-1}$ or $q_{-2,-1}$ contains no point with any of $p_{ij}$ other then $p_{1,-1}$ or $p_{2,-2}$ non-zero. Thus the intersection of the tangent spaces equals $\{ O \}$.

One can now define a projection from $O$ onto the $\{ p_{1,-1}=p_{2,-2}=0 \}\cong\mathbb{P}^3$ by
\[ (p_{1,2}:p_{1,-2}:p_{1,-1}:p_{2,-2}:p_{2,-1}:p_{-2,-1})\mapsto (p_{1,2}:p_{1,-2}:p_{2,-1}:p_{-2,-1}). \]

To a set of lines from $L$, passing through a point $(x_1:x_2:x_{-2}:x_{-1})$, there corresponds a line in $Q$, and thus, via the projection described above, a line in $\mathbb{P}^3$ with Pl\"ucker coordinates
\begin{align*}
& p_{1,-1}=p_{2,-2}=x_1x_{-1}+x_2x_{-2},\\
& p_{1,2}=x_1^2,\quad p_{1,-2}=x_2^2,\quad p_{2,-1}=x_{-2}^2,\quad p_{-2,-1}=x_{-1}^2.
\end{align*}
Thus we have established two mappings $\delta\colon\mathbb{P}V\to L$ and $\rho\colon L\to\mathbb{P}V$. If one assumes $F$ to be perfect, these two maps are bijections. This allows to define on the group $G(\mathbb{P}V, L)$ of all projectivities of $\mathbb{P}V$ conserving $L$ a map $\delta^*$ such that for any $g\in G(\mathbb{P}V, L)$ and $d\in L$ one has $\delta^*(g)(\rho(d))=\rho(g(d))$ (the right-hand side is defined since $g(d)\in L$). The action of an operator $g$ on $L\subset\mathbb{P}(\wedge^2V)$ can be described by $g\wedge g$, so $\delta^*(g)=\rho\circ(g\wedge g)\circ\rho^{-1}$.

On the other hand, the Tits automorphism $\tau$ on $\mathbb{P}V$ induces a map $\tau^*$ on $G(\mathbb{P}V, L)$ by $\tau^*(g)=\tau g \tau^{-1}$. The Suzuki group is then defined as the group of all $g\in G(\mathbb{P}V, L)$ such that $\delta^*(g)=\tau^*(g)$.

If the Frobenius endomorphism is not invertible (and hence so is $\tau$), the map $\tau^*$ is not defined, and so this definiton has to be changed. Since we are working over a ring, we will also lift the above mappings to the level of the underlying vector spaces (or free modules) and simply-connected symplectic group (and denote them by the same letters).
\begin{alignat*}{2}
\rho\colon \wedge^2V\to V,\quad & p_{1,2}e^1\wedge e^2 + p_{1,-2}e^1\wedge e^{-2} + p_{1,-1}e^1\wedge e^{-1} + {} \\
& + p_{2,-2}e^2\wedge e^{-2} + p_{2,-1}e^2\wedge e^{-1} + p_{-2,-1}e^{-2}\wedge e^{-1} \longmapsto \\
& \longmapsto p_{1,2}e^1 + p_{1,-2}e^2 + p_{2,-1}e^{-2} + p_{-2,-1}e^{-1}.
\end{alignat*}
In other words, with respect to the basises $e^1$, $e^2$, $e^{-2}$, $e^{-1}$ and $e^1\wedge e^2$, $e^1\wedge e^{-2}$, $e^1\wedge e^{-1}$, $e^2\wedge e^{-2}$, $e^2\wedge e^{-1}$, $e^{-2}\wedge e^{-1}$ it is given by the matrix
\[ \begin{pmatrix}
1&0&0&0&0&0 \\
0&1&0&0&0&0 \\
0&0&0&0&1&0 \\
0&0&0&0&0&1
\end{pmatrix}\]
To avoid confusion, the mappings induced by $\tau$ on $V$ and $\wedge^2V$ will be denoted by $\tau_4$ and $\tau_6$ respectively. Obviously, $\rho\tau_6=\tau_4\rho$.


Symplectic group $Sp(4, R)$ is the subgroup of $GL(4, R)$ of all matrices $g=(g_{ij})$ such that $g^tsg=s$, where $s=\operatorname{antidiag}(1,1,-1,-1)$ is the Gram matrix of a symplectic bilinear form. $Sp(4, R)$ acts naturally on $V$.

Denote by $\mathcal{V}$ the submodule of $\wedge^2V$ of all vectors $v$ having $v_{1,-1}=v_{2,-2}$, i.e. $\mathcal{V}=\langle e^1\wedge e^2, e^1\wedge e^{-2}, e^2\wedge e^{-1}, e^{-2}\wedge e^{-1}, v^0 \rangle$, where $v^0=e^1\wedge e^{-1}+e^2\wedge e^{-2}$. This submodule is $Sp(4, R)$-invariant, and the action is the $5$-dimensional ``short roots'' representation (equivalently, it is the natural representation of $SO(5, R)$).

We define Suzuki group as
\[ Sz(R, \tau) = \{ g\in Sp(4, R) \mid \forall v\in \mathcal{V}\ \ \rho(g\wedge g)\tau_6(v) = \tau_4 g\rho(v)  \}. \]

\begin{lemma}\label{lemma:SzGroup}
$Sz(R, \tau)$ is a group.
\end{lemma}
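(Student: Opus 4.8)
The plan is to reformulate the defining condition into a clean $R$-linear intertwining identity, establish a cocycle relation for it, and then read off the three subgroup axioms. Throughout I would write $g^{(\tau)}$ for the matrix obtained by applying $\tau$ to every entry of $g$; since $\tau$ is a ring endomorphism one has $(gh)^{(\tau)}=g^{(\tau)}h^{(\tau)}$, and since $g$ is invertible so is $g^{(\tau)}$, with $(g^{(\tau)})^{-1}=(g^{-1})^{(\tau)}$. The two elementary relations I would use are $\tau_4 g=g^{(\tau)}\tau_4$ on $V$ (apply $\tau$ to the entries of $gv$) and the given $\rho\tau_6=\tau_4\rho$. The invariance of $\mathcal V$ under $g\wedge g$ noted above guarantees that $g\wedge g$ restricts to $\mathcal V$, which is what makes the composites below meaningful.

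First I would rewrite the right-hand side of the defining equation. For every $v\in\mathcal V$ one has $\tau_4 g\rho(v)=g^{(\tau)}\tau_4\rho(v)=g^{(\tau)}\rho\tau_6(v)$, so the condition $\rho(g\wedge g)\tau_6(v)=\tau_4 g\rho(v)$ becomes $\bigl(\rho(g\wedge g)-g^{(\tau)}\rho\bigr)\tau_6(v)=0$. Setting $D(g)=\rho(g\wedge g)-g^{(\tau)}\rho$, viewed as an $R$-linear map $\mathcal V\to V$, membership in $Sz(R,\tau)$ says exactly that $D(g)$ vanishes on the image $\tau_6(\mathcal V)$.

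The key observation — and the step that replaces the use of $\tau^{-1}$, available only over a perfect field — is that $\tau_6$ fixes each of the five basis vectors of $\mathcal V$: the single nonzero Pl\"ucker coordinate of each of $e^1\wedge e^2,\ e^1\wedge e^{-2},\ e^2\wedge e^{-1},\ e^{-2}\wedge e^{-1}$ equals $1$, and $v^0$ has $p_{1,-1}=p_{2,-2}=1$, so all five are unchanged by applying $\tau$. Hence each basis vector lies in $\tau_6(\mathcal V)$, and an $R$-linear map vanishing on $\tau_6(\mathcal V)$ vanishes on a basis, hence identically. Therefore $g\in Sz(R,\tau)$ if and only if $D(g)=0$, i.e.\ $\rho(g\wedge g)=g^{(\tau)}\rho$ on $\mathcal V$ — a condition from which $\tau_6$ has disappeared.

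With this reformulation the subgroup axioms follow from one cocycle identity. Using $(gh)\wedge(gh)=(g\wedge g)(h\wedge h)$ and $(gh)^{(\tau)}=g^{(\tau)}h^{(\tau)}$, a direct manipulation gives $D(gh)=D(g)\,(h\wedge h)+g^{(\tau)}D(h)$ on $\mathcal V$. Since $D(1)=\rho-\rho=0$, the identity lies in $Sz(R,\tau)$; if $D(g)=D(h)=0$ then $D(gh)=0$, giving closure; and applying the identity with $h=g^{-1}$ yields $0=D(1)=g^{(\tau)}D(g^{-1})$ once $D(g)=0$, whence $D(g^{-1})=0$ because $g^{(\tau)}$ is invertible. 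I expect the only genuinely delicate point to be the third paragraph: without the remark that $\tau_6$ fixes a basis of $\mathcal V$ one cannot strip $\tau_6$ off, and the naive attempt to chain the defining equations directly fails precisely because $\tau$ need not be surjective, so $\tau_6$ cannot be commuted past $g\wedge g$.
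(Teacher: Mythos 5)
Your proof is correct, and it takes a genuinely different route from the paper. The paper proceeds by brute force: it introduces the section $\hat\rho$ with $\rho\hat\rho=\operatorname{id}_V$, expands $(g\wedge g)\tau_6(e^1\wedge e^2)$ entrywise, extracts four equations from the defining relation plus one from the upper-left corner of $g^tsg=s$, shows $\tau_6\hat\rho(g_{*1})$ and $(g\wedge g)\tau_6(e^1\wedge e^2)$ differ by a multiple of $v^0$ (which dies under $\rho$), treats $v^0$ separately, and handles inverses via $g^{-1}=sg^ts$ together with $s\in Sz(R,\tau)$ and closure under transposes. You instead linearize once and for all: the observation that every basis vector of $\mathcal V$ (including $v^0$) has all Pl\"ucker coordinates equal to $0$ or $1$, hence is fixed by $\tau_6$, lets you strip $\tau_6$ from the defining relation — vanishing of the $R$-linear map $D(g)=\rho(g\wedge g)-g^{(\tau)}\rho$ on the set $\tau_6(\mathcal V)$ is equivalent to $D(g)=0$ on $\mathcal V$, since that set contains a basis and (by semilinearity of $\tau_6$ on the same fixed basis) is contained in $\mathcal V$, which settles the converse implication you state but do not spell out. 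After that, the twisted derivation identity $D(gh)=D(g)(h\wedge h)+g^{(\tau)}D(h)$, legitimate on $\mathcal V$ because of its $Sp(4,R)$-invariance which you correctly invoke, gives identity, closure, and inverses formally, the last using invertibility of $g^{(\tau)}$ in place of the paper's transpose trick. Your verifications check out: $\tau_4 g=g^{(\tau)}\tau_4$ holds because $\tau$ is a ring endomorphism, and $(gh)\wedge(gh)=(g\wedge g)(h\wedge h)$ is functoriality of the exterior square. What each approach buys: the paper's computation produces the explicit equations on matrix entries that are reused verbatim in the proof of \cref{lemma:Bruhat} (e.g.\ $g_{11}g_{22}+g_{12}g_{21}=g_{11}^\tau$), so its length is not wasted; your argument is shorter, basis-light, and makes structurally transparent why no invertibility of $\tau$ is needed — it exhibits $Sz(R,\tau)$ as the set where a single $R$-linear intertwining condition $\rho(g\wedge g)=g^{(\tau)}\rho$ holds on $\mathcal V$, which fits the difference-algebraic point of view announced in the abstract.
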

\begin{proof}
Let $f, g\in Sz(R)$. One has to prove that $fg$ and $g^{-1}$ satisfy the defining relation. To show that two $\tau$-semilinear operators coincide, it is sufficient to check that their actions on the generators agree.

We will need a linear map $\hat\rho\colon V\to\wedge^2V$ given by the transpose of the matrix of $\rho$. One has $\rho\hat\rho=\operatorname{id}_V$.

Let us first show that $Sz(R, \tau)$ is closed under multiplication.

Rewrite $g\rho(e^1\wedge e^2)$ as $g_{*1}=g_{11}e^1+g_{21}e^2+g_{-2,1}e^{-2}+g_{-1,1}e^{-1}$, the first column of $g$. Note that it is equal to
$\rho(\hat\rho(g_{*1}))$ and that $\hat\rho(g_{*1})\in \mathcal{V}$. Then the left-hand side of the equation for $fg$ gives
\[
\tau_4 fg\rho(e^1\wedge e^2) = \tau_4f\rho\hat\rho(g_{*1}) = \rho(f\wedge f)\tau_6(\hat\rho(g_{*1}))
\]
The matrix expression of the exterior square gives
\[
(g\wedge g)\tau_6(e^1\wedge e^2) = \sum_{i<j} (g_{i1}g_{j2}+g_{j1}g_{i2}) e^i\wedge e^j
\]
On the other hand, the assumption $g\in Sz(R, \tau)$, applied to $v=e^1\wedge e^2$, translates into the following equations:
\begin{align*}
& g_{11}g_{22}+g_{12}g_{21}=g_{11}^\tau, \\
& g_{11}g_{-2,2}+g_{12}g_{-2,1} = g_{21}^\tau, \\
& g_{21}g_{-2,2}+g_{22}g_{-1,1} = g_{-2,1}^\tau, \\
& g_{-2,1}g_{-1,2}+g_{-2,2}g_{-1,1} = a_{-1,1}^\tau.
\end{align*}
Moreover, the examination of the entries in the upper-left corner of $g^tsg$ and $s$ shows that $g_{11}g_{-1,2}+g_{12}g_{-1,1}+g_{21}g_{-2,2}+g_{22}g_{-2,1}=0$. Together these five equations mean that
\[
(g\wedge g)\tau_6(e^1\wedge e^2) =
g_{11}^te^1\wedge e^2 + g_{21}^\tau e^1\wedge e^{-2} + g_{-2,1}^\tau e^2\wedge e^{-1} + g_{-1,1}^\tau e^{-2}\wedge e^{-1}
 + y\cdot v^0,
\]
where $y=g_{11}g_{-1,2}+g_{12}g_{-1,1}=g_{21}g_{-2,2}+g_{22}g_{-2,1}$. In other words,
\[
\tau_6\hat\rho(g_{*1})=(g\wedge g)\tau_6(e^1\wedge e^2) + y\cdot v^0.
\]
Since $f\in Sz(R, \tau)$ and $\rho(v^0)=0$, one has
\begin{gather*}
\rho(f\wedge f)(yv^0) = \rho(f\wedge f)\tau_6(yu) = \tau_4f\rho(yv^0) = 0. \\
\shortintertext{Thus}
\rho(f\wedge f)\tau_6(\hat\rho(g_{*1})) = \rho(f\wedge f) \big( (g\wedge g)\tau_6(e^1\wedge e^2) + yv^0 \big) = \rho(fg\wedge fg)\tau_6(e^1\wedge e^2).
\end{gather*}

The same holds for other basis vectors $e^i\wedge e^j$.

Consider now the action on $v^0$. The right-hand side evaluates to $0$. The assumption $g\in Sz(R, \tau)$ implies $\rho(g\wedge g)\tau_6(v^0)=0$, in particular, $g\wedge g(v^0)=zv^0$ for some $z\in R$, and the same holds for all scalar multiples of $v^0$. Hence
\[ \rho(fg\wedge fg)\tau_6(v^0)=\rho(f\wedge f)(zv^0)=\rho(z^2v^0)=0. \]

To show that $Sz(R, \tau)$ is closed under taking inverses note that for $g\in Sp(4, R)$ the inverse can be expressed as $g^{-1}=sg^ts$. A trivial check shows that $s\in Sz(R, \tau)$. Thus it is sufficient to show that $Sz(R, \tau)$ is closed under taking transposes.

Consider
\[ (g^t\wedge g^t)\tau_6(e^1\wedge e^2) = \sum_{i<j} (g_{1i}g_{2j}+g_{1j}g_{2i}) e^i\wedge e^j. \]
The equation $\rho(g\wedge g)\tau_6(e^i\wedge e^j)=\tau_4 g\rho(e^i\wedge e^j)$ for $i+j\neq0$ implies, by comparison of coefficients of $e^1=\rho(e^1\wedge e^2)$, that $g_{1i}g_{2j}+g_{1j}g_{2i}=g_{k1}^\tau$, where $k$ is such that $\hat\rho e^k=e^i\wedge e^j$. This shows that $(g^t\wedge g^t)\tau_6(e^1\wedge e^2)$ and $\tau_6\hat\rho((g^t)_{*1})$ differ by a multiple of $v^0$, hence $\rho(g^t\wedge g^t)\tau_6(e^1\wedge e^2)$ coincides with $\rho\tau_6\hat\rho((g^t)_{*1})=\tau_4\rho\hat\rho(g^te^1)=\tau_4 g^t\rho(e^1\wedge e^2)$.

Similar considerations show that the equality holds for the images of $e^i\wedge e^j$ with $i+j\neq0$.

As for the action on $v^0$, the right-hand side is again $0$, while for the left-hand side
\[
(g^t\wedge g^t)\tau_6(v^0) = \sum_{i<j} (g_{1i}g_{-1,j}+g_{1j}g_{-1,i}+g_{2i}g_{-2,j}+g_{2j}g_{-2,i}) e^i\wedge e^j.
\]
The coefficient of $e^i\wedge e^j$ in the above expression appears as the $(i, j)$-entry of $g^tsg$. Since $g\in Sp(4, R)$, one has $(g^t\wedge g^t)\tau_6(v^0)=v^0$ and hence $\rho(g^t\wedge g^t)\tau_6(v^0)=0$.
\end{proof}
Let us now define the analogue of the elementary subgroup in the context of Suzuki group. An upper unitriangular matrix that lies in $Sz(R, \tau)$, is one of the form
\[
x_+(a, b) = x_\alpha(a)x_\beta(a^\tau)x_{\alpha+\beta}(b)x_{2\alpha+\beta}(a^{2+\tau}+b^\tau) =
\begin{pmatrix}
1 & a & b+a^{1+\tau} & a^{2+\tau} +b^\tau+ab \\
& 1 & a^\tau & b \\
&& 1 & a \\
&&& 1
\end{pmatrix}.
\]
Such elements are subject to the relation
\[ x_+(a, b)\cdot x_+(c, d) = x_+(a+c, b+d+a^\tau c). \]
We also define $x_-(a, b)=s^{-1}x_+(a,b)s$. Define
\[ U=U^+ = \{ x_+(a, b) \mid a, b\in R \},\quad U^-={}^sU= \{ x_-(a, b) \mid a, b\in R \}. \]

The diagonal matrices that lie in $Sz(R, \tau)$ are of the form
\[ h(t) = h_\alpha(t)h_\beta(t^\tau) = \operatorname{diag}(t, t^{\tau-1}, t^{1-\tau}, t^{-1}). \]
A straightforward computation shows that
\begin{gather}
s\cdot h(t) = x_+(t^{1-\tau},t^{-1})\cdot x_-(0,t)\cdot x_+(t^{1-\tau}, 0) = x_-(0,t)^{x_+(t^{1-\tau}, 0)}, \label{eq:weyl-elements} \\
x_+(a,b)^{h(t)} = x_+(t^{\tau-2}a, t^{-\tau}b). \nonumber \\
\shortintertext{Denote}
H= \{ h(t) \mid t\in R^* \},\quad W=\{ 1, s \}\cong C_2,\quad B=HU,\quad N=WH. \nonumber
\end{gather}

We will now turn our attention to the Suzuki groups over a field (not necessarily perfect). First we will prove that $Sz(F, \tau)$ admits Bruhat decomposition. Since $\tau$ is not invertible, we do not have an endomorphism of $Sp(4, F)$ that would allow to carry the decomposition from the symplectic group as in \cite[Proposition~13.5.3]{CarterLie}. Neither we know whether $Sz(F, \tau)$ is generated by $U^+$ and $U^-$ to repeat the proof in \cite[Theorems~4 and~33(c)]{SteinbergChevalley}. Finally, we have nothing prepared for the geometric considerations as in \cite[n\textsuperscript{o}\,4]{TitsSuzukiRee}. Thus for the lack of ingenuity we will stick with the brute force.

\begin{lemma}\label{lemma:Bruhat}
$Sz(F, \tau) = UWHU$.
\end{lemma}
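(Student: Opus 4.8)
The inclusion $UWHU\subseteq Sz(F,\tau)$ is immediate from \cref{lemma:SzGroup} together with $U,H,s\subseteq Sz(F,\tau)$, so only the reverse inclusion requires work. I would first rewrite the target set: since $H$ normalises $U$ by the relation $x_+(a,b)^{h(t)}=x_+(t^{\tau-2}a,t^{-\tau}b)$, one has $UHU=HU=B$, and since $s^2=1$ and $WH=H\cup sH$, the assertion becomes $Sz(F,\tau)=B\cup UsB$ (using $HU=B$ once more). The cell of an element is then visible in its first column $ge^1$: as $U$ fixes $e^1$ and $H$ scales it, $g\in B$ forces $ge^1\in\langle e^1\rangle$, i.e. $g_{21}=g_{-2,1}=g_{-1,1}=0$; conversely, if $g=u_1shu_2\in UsB$ then $ge^1=u_1(t\,se^1)=u_1(te^{-1})$ for the torus parameter $t\in F^*$, so $g_{-1,1}=t\neq0$. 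Hence it suffices to prove, for $g\in Sz(F,\tau)$, the dichotomy that $g_{-1,1}=0$ implies $g\in B$ and $g_{-1,1}\neq0$ implies $g\in UsB$.

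The second case I would reduce to the first. Given $g_{-1,1}\neq0$, left-multiplication by $u=x_+(a,b)$ leaves the $e^{-1}$-component of $ge^1$ equal to $g_{-1,1}$, while allowing me to annihilate its $e^{-2}$- and $e^2$-components by the successive choices $a=g_{-2,1}g_{-1,1}^{-1}$ and a matching $b$. Both parameters are now spent, so the remaining $e^1$-component of $(ug)e^1$ is forced; the crux is that it vanishes automatically, leaving $(ug)e^1=g_{-1,1}e^{-1}$. Then $(sug)e^1=g_{-1,1}e^1\in\langle e^1\rangle$, so $sug\in B$ by the first case, and $g=u^{-1}s(sug)\in UsB$.

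For the first case, $g_{-1,1}=0$, I would substitute $v=e^1\wedge e^2$ (and the other $e^i\wedge e^j$) into the defining relation and combine the four coefficient equations recorded in the proof of \cref{lemma:SzGroup} with the symplectic relations $g^tsg=s$, so as to force first $g_{21}=g_{-2,1}=0$, i.e. $ge^1=g_{11}e^1$, and then the exact upper-triangular shape $g=h(t)x_+(a,b)$, placing $g\in B$. The main obstacle, shared by both cases, is precisely that the Suzuki relations collapse the eight Bruhat cells of $Sp(4,F)$ down to two; concretely, that the first column of every $g\in Sz(F,\tau)$ is an \emph{ovoid vector}, proportional either to $e^1$ or to $(a^{2+\tau}+b^\tau+ab,\,b,\,a,\,1)^t$. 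This is exactly what produces the automatic vanishing of the leftover $e^1$-component above. I expect the genuinely delicate point to be extracting this ovoid equation from the defining relation without inverting $\tau$: since the Frobenius, hence $\tau$, need not be surjective, one cannot divide by $\tau$-powers and must instead combine the defining and symplectic relations multiplicatively, which is where the bulk of the computation resides.
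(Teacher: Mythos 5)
Your skeleton is essentially the paper's proof: the same dichotomy on whether $g_{-1,1}$ vanishes, and in the big cell the very same normalizing element $u=x_+\big(g_{-1,1}^{-1}g_{-2,1},\,g_{-1,1}^{-1}(g_{21}+g_{-2,1}^{\tau+1}g_{-1,1}^{-\tau})\big)$ (your ``matching $b$'' is exactly this). Your one structural deviation is sound and even slightly cleaner: you reduce the case $g_{-1,1}\neq0$ to the triangular case by passing to $s\cdot ug$ and invoking \cref{lemma:SzGroup}, where the paper instead reads off $ug=s\,h(t)x_+(a,b)$ directly from the cleared matrix.

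The problem is that the decisive steps are announced rather than proved, and at the crux your justification is circular. The claim that the first column of every $g\in Sz(F,\tau)$ is an ``ovoid vector'' is essentially a restatement of the decomposition being proved (a normalized ovoid vector is either $e^1$ or $u\,e^{-1}$ for $u\in U$, i.e. it encodes membership of the column in the $B$- or $UsB$-cell), so it cannot be the source of the automatic vanishing of $f_{11}$; and you explicitly defer its derivation. Moreover, your diagnosis that the difficulty lies in extracting this equation ``without inverting $\tau$'' misdiagnoses where the work is: surjectivity of $\tau$ is never needed, only injectivity, and the gap closes with one substitution. With $f_{21}=f_{-2,1}=0$ and $f_{-1,1}\neq0$, evaluating the defining relation at $v=e^1\wedge e^2$ and comparing the coefficients of $e^{-2}$ and $e^1$ gives $f_{-1,1}f_{22}=0$ and $f_{11}f_{22}=f_{11}^\tau$; hence $f_{22}=0$, then $f_{11}^\tau=0$, and injectivity of $\tau$ (its square is the Frobenius of a field) yields $f_{11}=0$; two further substitutions ($e^1\wedge e^2$ again and $e^2\wedge e^{-1}$) kill $f_{12}$ and $f_{1,-2}$. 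Likewise, in the case $g_{-1,1}=0$ the paper forces triangularity by evaluating at $e^1\wedge e^2$, $e^1\wedge e^{-2}$ and $v^0$ in turn, obtaining successively $g_{-2,1}=g_{-1,2}=0$, then $g_{-2,2}=g_{-1,-2}=0$, then $g_{21}=0$ --- short computations, not the ``bulk'' you anticipated. So your outline would compile into a correct proof along the paper's lines, but as written the actual content of the lemma --- the vanishing forced by the Suzuki relation --- is missing.
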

\begin{proof}
Fix $g\in Sz(F, \tau)$. It is sufficient to prove that for suitable $u\in U$ one has $ug\in WHU$.

Assume first that $g_{-1,1}=0$. Then the defining relation $\rho(g\wedge g)\tau_6(v) = \tau_4 g\rho(v)$, evaluated at $v=e^1\wedge e^2$, gives $g_{-2,1}g_{-1,2}\cdot e^{-1}=0$. If $g_{-2,1}=0$, then the evaluation at $v=e^1\wedge e^{-2}$ gives $g_{-1,2}^\tau\cdot e^{-1}=0$. If $g_{-1,2}=0$, then the evaluation at $v=e^1\wedge e^2$ gives $g_{-2,1}\tau\cdot e^{-2}=0$. In either case, both $g_{-2,1}$ and $g_{-1,2}$ equal $0$. Now substitute $v=v^0$ to show that $g_{-2,2}g_{-1,-2}\cdot e^{-1}=0$. Substituting $v=e^1\wedge e^2$ and $e^1\wedge e^3$ and looking at the coefficients of $e^2$ and $e^{-2}$ respectively shows that $g_{-2,2}=g_{-1,-2}=0$. Then also $g_{21}=0$, and so $g$ is upper-triangular. It is easy to see that $g=h(t)x_+(a,b)$ for suitable $t$, $a$ and $b$. Namely, take $t=g_{11}$, $a=g_{11}^{-1}g_{12}$ and $b=g_{11}^{1-\tau}g_{2,-1}$. In this case $u=1$.

Consider now the case $g_{-1,1}\neq 0$. Set $u=x_+\big(g_{-1,1}^{-1}g_{-2,1}, g_{-1,1}^{-1}(g_{21}+g_{-2,1}^{\tau+1}g_{-1,1}^{-\tau})\big)$. Set $f=ug$, then $f_{21}=f_{-2,1}=0$ and $f_{-1,1}=g_{-1,1}\neq0$. Then the evaluation of the defining relation for $f$ at $v=e^1\wedge e^2$ shows that $f_{22}f_{-1,1}=0$ and $f_{11}f_{22}=f_{11}^\tau$, so $f_{22}=0$ and hence $f_{11}=0$. Now substituting $v=e^1\wedge e^2$ and $e^2\wedge e^{-1}$ gives $f_{12}^\tau=0$ and $f_{12}f_{2,-1}=f_{1,-2}^\tau$, so $f_{12}=f_{1,-2}=0$ and $f$ is of the form $s\,h(t)x_+(a,b)$, where $t=f_{-1,1}$, $a=f_{-1,1}^{-1}f_{-1,2}$, $b=f_{-1,1}^{1-\tau}f_{-2,-1}$.
\end{proof}

The following is a special case of Tits' simplicity theorem~\cite[Theorem~11.1.1]{CarterLie}
\begin{proposition*}
If $G$ is a perfect group with BN-pair of rank $1$ such that $B$ is solvable and core-free, then $G$ is simple.
\end{proposition*}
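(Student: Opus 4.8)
The plan is to run Iwasawa's simplicity argument on the doubly transitive permutation action of $G$ on the coset space $\Omega = G/B$. First I would unwind the rank-$1$ BN-pair axioms. With $H = B\cap N$ and $W = N/H = \{1, s\}$, the Bruhat decomposition gives $G = B \sqcup BsB$, so the double cosets $B\backslash G/B$ number exactly two. Equivalently, the point stabilizer $B$ has just two orbits on $\Omega$ — the fixed point and its complement — so $G$ acts doubly transitively, and in particular primitively, on $\Omega$. The kernel of this action is $\bigcap_{g\in G} gBg^{-1}$, the core of $B$ in $G$, which vanishes by hypothesis; hence the action is faithful.

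Next I would check that the conjugates of $B$ generate $G$. Their product $G_1$ is a normal subgroup of $G$ containing $B$, so in $G/G_1$ the image of $B$ is trivial and, by Bruhat, $G/G_1$ is generated by the image of $s$ and has order at most $2$. As a quotient of the perfect group $G$ it is itself perfect, and a perfect group of order at most $2$ is trivial; therefore $G_1 = G$.

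Then I would carry out the main step. Let $M \neq 1$ be a normal subgroup of $G$; the goal is $M = G$. The $M$-orbits on $\Omega$ form a $G$-invariant partition, so primitivity leaves only two options, and faithfulness rules out the partition into singletons; hence $M$ is transitive, i.e.\ $G = MB$. Writing any $g = mb$ with $m \in M$, $b\in B$, one gets $gBg^{-1} = m(bBb^{-1})m^{-1} = mBm^{-1}$, whose image in $G/M$ is $\bar m \bar B \bar m^{-1} = \bar B$ (bars denoting passage to $G/M$). Thus every conjugate of $B$ has image $\bar B$, and since such conjugates generate $G$ we obtain $G/M = \bar B$. This exhibits $G/M$ as a quotient of the solvable group $B$, hence solvable; being also a quotient of the perfect group $G$, it is perfect; and a group that is simultaneously solvable and perfect is trivial, so $M = G$.

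The argument is elementary once the permutation-theoretic dictionary is in place, so I do not anticipate a genuine obstacle. The only points to state with care are that two double cosets translate into double transitivity, that the core-freeness hypothesis is precisely faithfulness of the action on $G/B$, and that it is the solvability of the whole of $B$ (not merely abelianness of some root subgroup) that is consumed by the final clash between perfectness and solvability.
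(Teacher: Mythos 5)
Your proof is correct, but it takes a genuinely different route from the paper, which in fact offers no argument at all: the proposition is stated there as a special case of Tits' simplicity theorem and dispatched by citation to \cite[Theorem~11.1.1]{CarterLie}. Tits' proof of the general theorem, as presented by Carter, works with parabolic subgroups: for a normal subgroup $N$ one shows $NB$ is a standard parabolic, and irreducibility of the Weyl group forces $NB=B$ or $NB=G$, after which the perfect-versus-solvable clash finishes. Your argument is the rank-$1$ specialization recast in permutation-group language: two double cosets give double transitivity, hence primitivity, of $G$ on $G/B$; core-freeness is exactly faithfulness; and primitivity applied to the block system of $M$-orbits yields $MB=G$ directly, bypassing the theorem that overgroups of $B$ are parabolic. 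The endgame --- $G/M$ a quotient of the solvable $B$ and simultaneously a quotient of the perfect $G$, hence trivial --- is the same as in Tits' proof. Each of your steps checks out, including the generation step: the normal closure $G_1$ of $B$ (better said to be \emph{generated} by the conjugates than to be their product) gives a quotient $G/G_1$ generated by the image of $s$; note you do not even need $\bar{s}^2=1$ (which does hold, since a representative of $s$ squares into $H\subseteq B$), as a cyclic quotient of a perfect group is already trivial. What your approach buys is a self-contained elementary proof tailored to rank $1$; what the citation buys the paper is brevity and the full strength of the general theorem.
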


\begin{lemma}\label{lemma:SzSimple}
If $F\neq\mathbb{F}_2$, the group $Sz(F, \tau)$ is simple.
\end{lemma}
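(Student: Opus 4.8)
The plan is to verify the hypotheses of the Proposition above for $G=Sz(F,\tau)$: that the pair $(B,N)$ with $B=HU$ and $N=WH$ is a BN-pair of rank $1$, that $B$ is solvable and core-free, and that $G$ is perfect. Simplicity then follows at once.

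First I would read off the BN-pair axioms from the structure already in place. \Cref{lemma:Bruhat} gives the Bruhat decomposition $G=UWHU$, hence $G=B\cup BsB$; since $s$ is not upper unitriangular we have $s\notin B$, so the two double cosets are distinct and $W=N/H\cong C_2$ is generated by the involution $s$ (note $s^2=-I=I$ in characteristic $2$). It remains to note $H=B\cap N$ (an element $sh$ of $N\setminus H$ cannot be upper-triangular), that $s$ normalises $H$ (indeed $sh(t)s^{-1}=h(t^{-1})$, since conjugation by the antidiagonal $s$ reverses the diagonal) so that $H\trianglelefteq N$, and that $sBs^{-1}=HU^-\neq B$. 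In rank $1$ the mixing axiom $sBw\subseteq BswB\cup BwB$ is automatic, so these checks complete the BN-pair. Solvability of $B$ is routine: $H\cong F^*$ is abelian, the relation $x_+(a,b)x_+(c,d)=x_+(a+c,b+d+a^\tau c)$ shows $U$ is nilpotent of class at most $2$ with $[U,U]\le\{x_+(0,e)\}$, and $U\trianglelefteq B$ with abelian quotient $B/U\cong H$.

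For core-freeness I would argue that any normal subgroup $Z$ of $G$ contained in $B$ also satisfies $Z=sZs^{-1}\le sBs^{-1}=HU^-$, whence $Z\le B\cap HU^-=H$ (an element lying in both the upper and the lower triangular part must be diagonal). Now if $h(t)\in Z$ then, $Z$ being normal, every commutator $[x_+(a,b),h(t)]$ lies in $Z\le H$; but this commutator equals $x_+(a,b)^{-1}x_+(t^{\tau-2}a,t^{-\tau}b)\in U$ by the formula $x_+(a,b)^{h(t)}=x_+(t^{\tau-2}a,t^{-\tau}b)$, so it lies in $U\cap H=1$. Thus $h(t)$ centralises $U$, forcing $t^{\tau-2}=t^{-\tau}=1$ and hence $t^2=1$, i.e. $t=1$. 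Therefore $Z=1$; this step does not use $F\neq\mathbb{F}_2$.

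The main obstacle is perfectness, and this is exactly where the hypothesis $F\neq\mathbb{F}_2$ enters. If $t^\tau=t^2$ held for all $t\in F$, then applying $\tau$ and using $\tau^2=\mathrm{Frob}$ would give $t^4=t^2$ for all $t$, i.e. $F=\mathbb{F}_2$; so for $F\neq\mathbb{F}_2$ I may fix $t\in F^*$ with $t^{\tau-2}\neq1$, and such $t$ is necessarily $\neq1$, whence also $t^\tau\neq1$ and $t^{-\tau}\neq1$. Computing
\[
[x_+(a,b),h(t)] = x_+\bigl((1+t^{\tau-2})a,\ (1+t^{-\tau})b+(1+t^{\tau-2})a^{1+\tau}\bigr)
\]
and letting $a$ and $b$ range over $F$ (the coefficients $1+t^{\tau-2}$ and $1+t^{-\tau}$ being nonzero) exhibits every $x_+(a',b')$ as a commutator, so $U\subseteq[G,G]$, and conjugating by $s$ gives $U^-\subseteq[G,G]$. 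Taking $t=1$ in \eqref{eq:weyl-elements} writes $s$ as a product of elements of $U$ and $U^-$, so $s\in[G,G]$; then $h(t)=s^{-1}\bigl(sh(t)\bigr)\in[G,G]$ for every $t$ gives $H\subseteq[G,G]$. Since $G=\langle U,H,s\rangle$ by the Bruhat decomposition, we conclude $[G,G]=G$. With all hypotheses in hand, the Proposition yields that $Sz(F,\tau)$ is simple.
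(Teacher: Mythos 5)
Your proof is correct, and while it follows the same overall skeleton as the paper (verify the hypotheses of Tits' rank-$1$ simplicity criterion using \cref{lemma:Bruhat} and \cref{eq:weyl-elements}), you diverge from the paper in two substantive sub-arguments. For core-freeness the paper computes explicitly: it checks $B\cap B^s=H$ and then evaluates matrix entries of elements of $B^{x_-(0,1)}$ (namely $g_{2,-2}=t^{\tau-1}a^\tau$, $g_{2,-1}=t^{\tau-1}b$, $g_{-1,1}=t+t^{-1}$) to conclude $B\cap B^s\cap B^{x_-(0,1)}=1$. You instead exploit normality abstractly: a normal subgroup $Z\le B$ lies in $B\cap B^s=H$, and then $[x_+(a,b),h(t)]\in Z\cap U\le H\cap U=1$ forces $t=1$ via the conjugation formula $x_+(a,b)^{h(t)}=x_+(t^{\tau-2}a,t^{-\tau}b)$ --- a cleaner argument that avoids the matrix computation, and you correctly note it works even for $F=\mathbb{F}_2$. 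For perfectness the paper splits $x_+(a,b)=x_+(a,0)x_+(0,b)$ and realises each factor separately (the second up to a correction term, so each generator is a product of two commutators), whereas you choose a single $t$ with $t^\tau\neq t^2$ (your argument that such $t$ exists when $F\neq\mathbb{F}_2$, by applying $\tau$ to get $t^4=t^2$, is sound and parallels the paper's implication chain) and then solve the full commutator formula $[x_+(a,b),h(t)]=x_+\bigl((1+t^{\tau-2})a,(1+t^{-\tau})b+(1+t^{\tau-2})a^{1+\tau}\bigr)$ for arbitrary targets, exhibiting every element of $U$ as a \emph{single} commutator; note your exponents differ from the paper's ($t^{\tau-2}$ versus $t^{2-\tau}$) only because you use the convention $[x,h]=x^{-1}h^{-1}xh$ while the paper uses $[x,h]=xhx^{-1}h^{-1}$, and your formula is internally consistent with your convention. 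Finally, you spell out the BN-pair axioms and the solvability of $B$ (via $U$ nilpotent of class $\le2$ and $B/U\cong H$), which the paper dismisses as easy; these checks are all correct, including the observation that the mixing axiom is automatic in rank $1$ given $G=B\cup BsB$.
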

\begin{proof}
It is easy to see that $B\cap s^{-1}Bs=H$. Consider now the subgroup $B^{x_-(0, 1)}$. Its elements are of the form $g=x_-(0,1)h(t)x_+(a, b)x_-(0,1)$. A straightforward calculations show that $g_{2,-2}=t^{\tau-1}a^\tau$ and $g_{2,-1}=t^{\tau-1}b$. If $g\in H$, then $a=b=0$. This implies $g_{-1,1}=t+t^{-1}$, so $g\notin H$ unless $t=1$. In the latter case $g=1$. Hence $\bigcap_{g\in G}B^g\subseteq B\cap B^s\cap B^{x_-(0, 1)}=1$, so $B$ is core-free.

To prove that $G$ is perfect, it suffices to present its generators as products of commutators. First note that $x_+(a, b)=x_+(a, 0)x_+(0, b)$ and that by \cref{lemma:Bruhat} and \cref{eq:weyl-elements} the subgroups $U^+$ and $U^-$ generate $G$. Now consider
\[ [x_+(0, b), h(t)] = x_+(0, b(1+t^\tau)). \]
Since $F\neq\mathbb{F}_2$, one can take $t\neq1$ and set $b=d/(1+t^\tau)$. Then this commutator equals $x_+(0, d)$. On the other hand,
\[ [x_+(a, 0), h(t)] = x_+\big(a(1+t^{2-\tau}), a^{1+\tau}(t^\tau+t^{2-\tau})\big). \]
Since $\tau$ in injective and $\tau^2$ is the Frobenius endomorphism, one has
\[ 1+t^{2-\tau}=0\ \Rightarrow\ t^\tau=t^2\ \Rightarrow\ t=t^\tau\ \Rightarrow\ t=t^2\ \Rightarrow t=1. \]
So taking $t\neq1$ and $a=c/(1+t^{2-\tau})$ gives $[x_+(a, 0), h(t)] = x_+(c, 0)x_+(0, *)$. Multiplication by a suitable $x_+(0, *)$, which is a commutator, delivers the expression of $x_+(c, d)$ as a product of two commutators. An element of the form $x_-(c, d)$ has a similar expression since it is conjugate to $x_+(c, d)$.

Checking that $(B, N)$ form a rank $1$ $BN$-pair for $G$ is easy. Thus by the proposition above $Sz(F, \tau)$ is simple.
\end{proof}
\bibliographystyle{amsalpha}
\bibliography{suzuki}
\end{document}